\newcommand{\Ric}{\text{Ric}}
\newcommand{\Vol}{\text{Vol}}
\newcommand{\NN}{\mathds{N}}
\newcommand{\RR}{\mathds{R}}
\newcommand{\ZZ}{\mathds{Z}}
\newcommand{\Sn}{\mathds{S}}
\newcommand{\cC}{\mathcal{C}}
\newcommand{\cE}{\mathcal{E}}
\newcommand{\cS}{\mathcal{S}}
\newcommand{\cR}{\mathcal{R}}
\begin{document}

\newtheorem{theorem}{Theorem}[section]

\newtheorem{proposition}{Proposition}[section]

\newtheorem{lemma}{Lemma}[section]

\newtheorem{corollary}{Corollary}[section]

\theoremstyle{definition}
\newtheorem{definition}{Definition}[section]

\theoremstyle{remark}
\newtheorem{remark}{Remark}[section]

\theoremstyle{remark}
\newtheorem{example}{Example}[section]

\theoremstyle{remark}
\newtheorem{note}{Note}[section]

\theoremstyle{remark}
\newtheorem{question}{Question}[section]

\theoremstyle{remark}
\newtheorem{conjecture}{Conjecture}[section]

\title{Lower Ricci Curvature, Branching, \\and\\ Bi-Lipschitz Structure of Uniform Reifenberg Spaces}

\author{Tobias Holck Colding and Aaron Naber\thanks{Department of Mathematics, Massachusetts Institute of Technology, Cambridge, MA 02139.  Emails: colding@math.mit.edu and anaber@math.mit.edu.
         The  first author
was partially supported by NSF Grant DMS  0606629, DMS 1104392,  and NSF FRG grant DMS
 0854774 and the second author by an NSF Postdoctoral Fellowship.}}


\date{\today}
\maketitle
\begin{abstract}
We study here limit spaces $(M_\alpha,g_\alpha,p_\alpha)\stackrel{GH}{\rightarrow} (Y,d_Y,p)$, where the $M_\alpha$ have a lower Ricci curvature bound and are volume noncollapsed.  Such limits $Y$ may be quite singular, however it is known that there is a subset of full measure $\cR(Y)\subseteq Y$, called {\it regular} points, along with coverings by the almost regular points $\cap_\epsilon \cup_r\cR_{\epsilon,r}(Y)=\cR(Y)$ such that each of the {\it Reifenberg sets} $\cR_{\epsilon,r}(Y)$ is bi-H\"older homeomorphic to a manifold.  It has been an ongoing question as to the bi-Lipschitz regularity the Reifenberg sets.  Our results have two parts in this paper.  First we show that each of the sets $\cR_{\epsilon,r}(Y)$ are bi-Lipschitz embeddable into Euclidean space.  Conversely, we show the bi-Lipschitz nature of the embedding is sharp.  In fact, we construct a limit space $Y$ which is even uniformly Reifenberg, that is, not only is each tangent cone of $Y$ isometric to $\RR^n$ but convergence to the tangent cones is at a uniform rate in $Y$, such that there exists no $C^{1,\beta}$ embeddings of $Y$ into Euclidean space for any $\beta>0$.  Further, despite the strong tangential regularity of $Y$, there exists a point $y\in Y$ such that every pair of minimizing geodesics beginning at $y$ branches to any order at $y$.  More specifically, given {\it any} two unit speed minimizing geodesics $\gamma_1$, $\gamma_2$ beginning at $y$ and {\it any} $0\leq \theta\leq \pi$, there exists a sequence $t_i\to 0$ such that the angle $\angle \gamma_1(t_i)y\gamma_2(t_i)$ converges to $\theta$.

\end{abstract}

\section{Introduction}

In this paper we are interested in pointed Gromov-Hausdorff limits
\begin{align}\label{con:Y_limit}
(M^n_\alpha,g_\alpha,p_\alpha)\stackrel{GH}{\rightarrow} (Y^n,d_Y,p)
\end{align}
such that the $M_\alpha$'s are $n$-dimensional and satisfy the lower Ricci bound
\begin{align}\label{con:lower_Ricci}
 \Ric(M_\alpha)\geq -(n-1)\, ,
\end{align}
and the noncollapsing assumption
\begin{align}\label{con:noncollapsed}
 \Vol(B_1(p_\alpha))\geq v>0\, .
\end{align}

Given a point $y\in Y$, a tangent cone at $y$ is a metric space $Y_y$, such that for some sequence $r_i\rightarrow 0$ the limit
\begin{align}
 (Y,r_i^{-1}d_Y,y)\stackrel{GH}{\rightarrow}(Y_y,d,y)
\end{align}
exists.  Tangent cones exist at every point and by (\ref{con:noncollapsed}) are metric cones, though they need not be unique, see \cite{ChC2}.  However, it also follows from \cite{ChC2} that there exists a set $\cS\subseteq Y$ satisfying
$$
\dim \cS\leq n-2\, ,
$$
where $\dim$ refers to the Hausdorff dimension, such that for every $y\in \cR(Y)\equiv Y\setminus S$ the tangent cones at $y$ are unique and isometric to $\RR^n$.  We call such points regular points.  The following definition is a uniform version of a regular point, and makes for a convenient notion when studying $\cR$.  The point $0^n\in\RR^n$ will represent the origin in Euclidean space.

\begin{definition}
 For every $\epsilon,r>0$ let us define $\cR_{\epsilon,r}(Y)\subseteq Y$ by
\begin{align}
 \cR_{\epsilon,r}(Y)\equiv\{y\in Y: \forall s<r\text{ we have }d_{GH}(B_s(y),B_s(0^n))<\epsilon s\}\, .
\end{align}
We call such points $(\epsilon,r)$-Reifenberg points.  We say that $Y$ is a uniform Reifenberg space if for every $\epsilon>0$ there is an $r>0$ such that  $Y\equiv \cR_{\epsilon,r}(Y)$.
\end{definition}

A space $Y$ being a uniform Reifenberg space is the statement that not only is every tangent cone of $Y$ isometric to $\RR^n$, but that convergence to these tangent cones is uniform in $Y$.  Being a uniform Reifenberg space is nearly the statement that $Y$ is a Riemannian manifold.  Theorems \ref{t:bilipschitzembedding}, \ref{t:nodiffeomorphism} and \ref{t:nonunique_angle} analyze to what extent this statement is true and false.

A basic property of $\cR_{\epsilon,r}(Y)$ is that
$$
\bigcap_{\epsilon>0}\bigcup_{r>0}\cR_{\epsilon,r}(Y) = \cR(Y)\, .
$$
The regular points represent the points of $Y$ which are {\it infinitesimally} Euclidean.  In fact, it was proved in \cite{ChC2} that for $\epsilon\leq \epsilon(n)$ that $\cR_{\epsilon,r}(Y)$ is $C^{0,\alpha}$ bi-H\"older homeomorphic to a Riemannian manifold for some $0<\alpha<1$.  It was a question as to the bi-Lipschitz structure of such spaces.  The first main theorem of this paper is the following.

\begin{theorem}\label{t:bilipschitzembedding}
 Let $(M^n_\alpha,g_\alpha,p_\alpha)\stackrel{GH}{\rightarrow} (Y^n,d_Y,p)$ with the $M^n_\alpha$ satisfying (\ref{con:lower_Ricci}) and (\ref{con:noncollapsed}).  Then there exists $\bar\epsilon(n),\bar r(n)>0$ such that for every $\epsilon\leq \bar\epsilon$ and $r\leq \bar r$ we have that bounded subsets of $\cR_{\epsilon,r}(Y)$ are bi-Lipschitz embeddable into some Euclidean space.  In particular, if $Y$ is a compact uniform Reifenberg space then $Y$ is bi-Lipschitz embeddable into some Euclidean space.
\end{theorem}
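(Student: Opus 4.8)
The plan is to build the embedding out of the harmonic $\epsilon$-splitting maps of Cheeger–Colding, assembled over all scales, with the bi-Lipschitz bounds coming from a sharp integral Hessian estimate tied to the monotonicity of the volume ratio. First I would fix $y\in\cR_{\epsilon,r}(Y)$ and, for each dyadic scale $s_k=2^{-k}r$, produce a harmonic map $u_k=(u_k^1,\dots,u_k^n):B_{s_k}(y)\to\RR^n$ which is an $\epsilon'$-splitting map in the sense that $\Vol(B_{s_k}(y))^{-1}\int_{B_{s_k}(y)}\big(|\langle\nabla u_k^i,\nabla u_k^j\rangle-\delta^{ij}|+s_k^2|\Hess u_k|^2\big)\le\Psi(\epsilon'|n)$, where $\Psi(\epsilon'|n)\to0$ as $\epsilon'\to0$. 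These are obtained on the approximating manifolds $M_\alpha$ by solving a Dirichlet problem with boundary data close to the Euclidean coordinates — which is possible by the Gromov–Hausdorff closeness $d_{GH}(B_s(y),B_s(0^n))<\epsilon s$ and Colding's volume convergence — and then passed to the limit $Y$ using the $L^2$ convergence of gradients and Laplacians on noncollapsed Ricci limits. I would choose $\bar\epsilon,\bar r$ so small that all the estimates below hold.

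The crucial point, and the one distinguishing Ricci limits from general Reifenberg-flat spaces, is a Carleson-type bound on how much these charts rotate from scale to scale. Using the segment inequality together with the pointwise consequences of the Hessian estimate, I would show that on $B_{s_{k+1}}(y)$ one has $u_k\approx A_k u_{k+1}+c_k$ for an almost-orthogonal $A_k$, and that the square-summed tilt of the approximate tangent maps, controlled by $\sum_k|A_k^{\mathsf T}A_k-I|^2$, is dominated by the total variation of the volume ratio $s\mapsto\Vol(B_s(y))/(\omega_n s^n)$ across $(0,r)$. By Bishop–Gromov this ratio is monotone in $s$; since every tangent cone at $y$ is $\RR^n$ its limit as $s\to0$ is $\omega_n$, while the Reifenberg condition and volume convergence give $\Vol(B_r(y))/(\omega_n r^n)\ge1-\Psi$, so its total variation over $(0,r)$ is at most $\Psi\omega_n$. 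This summability is exactly what upgrades the bi-Hölder statement of \cite{ChC2} to a bi-Lipschitz one.

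With this in hand I would assemble the embedding $\Phi:B_r(y)\to\RR^N$, $N=N(n)$, by concatenating suitably normalized, cut-off scale-$k$ coordinates. The Lipschitz bound follows from $|\nabla u_k|\le1+\Psi$ at each scale together with the geometric decay of the contributions, so the sum converges and $\Phi$ is Lipschitz. The lower bound is the heart of the matter: given $x,x'$ with $d(x,x')$ comparable to $s_k$, the single scale-$k$ map already separates them by at least $(1-\Psi)d(x,x')$ because it is an almost isometry at scale $s_k$, and the summability above guarantees that the contributions of all other scales perturb this separation by at most a small multiple of $d(x,x')$. Hence $\Phi$ is bi-Lipschitz with constant depending only on $n$. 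Finally, for a bounded subset $K\subseteq\cR_{\epsilon,r}(Y)$ I would cover $K$ by finitely many balls $B_r(y_j)$ and combine the local maps $\Phi_j$ with finitely many global distance functions $x\mapsto d_Y(x,q_l)$ into a single map into a larger Euclidean space, the distance coordinates separating points lying in different balls; taking $K=Y$ gives the compact uniform Reifenberg case.

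The main obstacle is the second step: establishing the square-summable (Carleson) control on the scale-to-scale rotation of the splitting maps and then showing that the near-isometry at the critical scale $s_k\sim d(x,x')$ survives the accumulated perturbations from all other scales. This is precisely where the lower Ricci bound is indispensable — mere Reifenberg flatness controls each scale only by a fixed $\Psi$, which would sum to a logarithmically divergent defect and yield only bi-Hölder regularity, whereas the monotonicity of the volume ratio forces the summed defect to be finite and small.
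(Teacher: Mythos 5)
Your proposal takes a completely different route from the paper, and its central step contains a genuine gap. For contrast: the paper never uses splitting maps or multi-scale charts. It embeds $\cR_{\epsilon,100r}(Y)$ into the infinite-dimensional space $L^2(Y)$ by sending $y$ to the distance function $\rho_y=\min\{d(\cdot,y),\text{cutoff at }10r\}$. The Lipschitz upper bound is just the triangle inequality plus a volume bound. The lower bound $\|\rho_x-\rho_y\|_{L^2}\geq C^{-1}d(x,y)$ is proved at the single scale $d(x,y)$: by Bishop--Gromov and volume convergence a definite fraction of the measure of $B_{10r}(x)$ consists of points $z$ lying on minimizing geodesics from $x$ that pass through $B_{d(x,y)/4}(y)$, and for every such $z$ the reverse triangle inequality $|d(x,z)-d(z,y)|>\tfrac{1}{2}d(x,y)$ persists along the geodesic (Lemma \ref{l:reverse_triangle}). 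Finite dimensionality of the target is then obtained abstractly, by projecting the bi-Lipschitz image in $L^2(Y)$ onto a finite-dimensional subspace via the theorem of \cite{MoWe}. No coherence of coordinates across scales is ever required.

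The gap in your argument is the passage from the claimed Carleson ($\ell^2$) bound $\sum_k|A_k^{\mathsf T}A_k-I|^2\lesssim\Psi$ to the assertion that the other scales perturb the critical-scale separation by only a small multiple of $d(x,x')$. Controlling the accumulated transformation $A_1A_2\cdots A_k$ requires $\ell^1$ summability of the tilts, not $\ell^2$, and the paper's own example (Theorems \ref{t:nodiffeomorphism} and \ref{t:nonunique_angle}) is engineered precisely so that $\ell^1$ summability fails while the quantity you propose as the controlling majorant is negligible: the metric there is $dr^2+r^2h^2(r)g(f(r))$ with $g(s)=\varphi_s^*g$ volume-form preserving, so the volume ratio has arbitrarily small total variation near the cone point, yet $f(r)\to-\infty$ and the identifications $\varphi_{f(r)}$ of the cross-section with the round sphere never converge --- this is exactly what produces the branching and the ill-defined angles of Theorem \ref{t:nonunique_angle}. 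So the total variation of the volume ratio cannot dominate the accumulated rotation of best-fit charts, and the "heart of the matter" step of your lower bound fails. A secondary problem is the assembly: concatenating coordinates over infinitely many dyadic scales does not land in $\RR^{N(n)}$, while summing them with geometrically decaying normalizations destroys the lower Lipschitz bound at fine scales; the paper sidesteps this by passing through $L^2(Y)$ and only then invoking the abstract finite-dimensional projection result.
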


Now that we have seen some extent to which a uniform Reifenberg space $Y$ behaves like a Riemannian manifold, we would now like to see to what extend it doesn't.  To begin with let us see that there exists uniform Reifenberg spaces such that the bi-Lipschitz structure of Theorem \ref{t:bilipschitzembedding} is sharp.  We can interpret this as follows.  We know by \cite{ChC2} that $Y$ is $C^{0,\beta}$-bi-H\"older to a manifold, and by Theorem \ref{t:bilipschitzembedding} that the geometry on $Y$ is induced by a $C^{0,1}$-lipschitz structure.  However, the next theorem tells us that there exist examples of uniform Reifenberg spaces where the geometry need not be induced by a $C^{1,\beta}$-structure for any $\beta>0$.  More precisely we have:

\begin{theorem}\label{t:nodiffeomorphism}
 For $n\geq 3$ there exists $(M^n_\alpha,g_\alpha,p_\alpha)\stackrel{GH}{\rightarrow} (Y^n,d_Y,p)$ with the $M^n_\alpha$ satisfying (\ref{con:lower_Ricci}) and (\ref{con:noncollapsed}), and such that
 \begin{enumerate}
 \item $Y$ is homeomorphic to $\RR^n$.  In fact, for each $0<\beta<1$ we have that $Y$ is $C^{0,\beta}$-bi-H\"older to $\RR^n$.
 \item Every tangent cone of $Y$ is isometric to $\RR^n$.  In fact, $Y$ is a uniform Reifenberg space.
 \item There does not exist a homeomorphism $\phi:\RR^n\to Y$ such that the pullback geometry is induced by a $C^{0,\beta}$-metric for any $0<\beta<1$.
 \end{enumerate}
\end{theorem}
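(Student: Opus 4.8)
The plan is to exhibit the example explicitly, as a pointed Gromov--Hausdorff limit of smooth metrics. I would model the limit $Y$ near its single singular point $y$ on a generalized cone
$$g=dr^2+r^2\,h(r)$$
over the link $\Sn^{n-1}$, where for each fixed $r$ the metric $h(r)$ is a slightly eccentric round-type metric --- eccentricity $\epsilon(r)$ --- that is being rigidly rotated inside $SO(n)$ as $r\to0$, with $\epsilon(r)\to0$ but with total rotation angle tending to $\infty$. The dimension hypothesis $n\geq 3$ enters here: the link $\Sn^{n-1}$ must both carry eccentric deformations (impossible for $\Sn^1$) and admit rotations that move the eccentric directions, so that the rotation can eventually sweep out every relative angle in $[0,\pi]$. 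I would then define $M_\alpha$ by realizing $g$ on the annulus $r\in[r_\alpha,r_0]$, smoothly capping it with a rounded flat cap for $r<r_\alpha$ (with $r_\alpha\to0$) and interpolating to the flat metric for $r>r_0$, so that each $M_\alpha$ is a complete smooth manifold diffeomorphic to $\RR^n$, flat outside a fixed ball, with $\Vol(B_1)\geq v$, and $M_\alpha\stackrel{GH}{\rightarrow}Y$.

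Conclusions (1) and (2) should follow from the two qualitative features of $h(r)$. Because $\epsilon(r)\to0$, every blow-up $(Y,r_i^{-1}d_Y,y)$ converges to the cone over the \emph{round} $\Sn^{n-1}$, namely $\RR^n$, and, the only singular point being $y$, this happens at a rate uniform over $Y$; hence $Y$ is a uniform Reifenberg space, which is (2). For (1), undoing the accumulated rotation and eccentricity link-by-link defines an explicit homeomorphism $Y\to\RR^n$ whose distortion at scale $r$ is controlled by the local rotation rate $\omega(r)$; choosing $\omega(r)$ to grow more slowly than every power of $r^{-1}$ (for instance like $\log(1/r)$) makes this map bi-H\"older with every exponent $\beta<1$, while the vanishing of $\epsilon(r)$ prevents any radial loss.

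The core is conclusion (3), which I would obtain from branching of geodesics at $y$. The first step is to analyze the angular part of the geodesic equation for a minimizing geodesic through $y$: the rotating eccentric link drags such a geodesic, and for any two of them the comparison angle $\angle\gamma_1(t)\,y\,\gamma_2(t)$ varies, in the variable $\log(1/t)$, at a rate comparable to $\omega(t)\,\epsilon(t)$. Tuning the rates so that $\int\omega(t)\,\epsilon(t)\,d\log(1/t)=\infty$ while the individual increments tend to $0$ forces this angle to oscillate and accumulate at every value of $[0,\pi]$ as $t\to0$; in particular it has no limit. This is incompatible with a homeomorphism $\phi:\RR^n\to Y$ whose pullback $\phi^*d_Y$ is induced by a $C^{0,\beta}$ Riemannian metric $g$: writing $x_0=\phi^{-1}(y)$, a minimizing geodesic from $x_0$ to a point at distance $t$ lies where $g$ differs from the frozen metric $g(x_0)$ by $O(t^\beta)$, so comparison with $g(x_0)$ pins down its direction at $x_0$ up to $O(t^\beta)$, and the comparison angle of any two such geodesics therefore converges, at rate $O(t^\beta)$, to the $g(x_0)$-angle of their initial directions. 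As comparison angles are determined by $d_Y$ alone, this convergence would hold in $Y$ and contradict the oscillation. Thus no such $\phi$ --- and, since a $C^{1,\beta}$ embedding has a $C^{0,\beta}$ first fundamental form, no $C^{1,\beta}$ embedding --- can exist.

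The main obstacle is keeping $\Ric\geq-(n-1)$ on the $M_\alpha$ while retaining enough rotation to branch. A \emph{static} eccentric cone, whose link satisfies only $\Ric_h\geq(n-2)-c\,\epsilon(r)$, has Ricci bounded below merely by $-c\,\epsilon(r)/r^2$, which blows up at the tip and destroys any fixed lower bound; and since the round sphere is Ricci-rigid one cannot instead demand $\Ric_h\geq(n-2)$ for a genuinely eccentric link. The resolution, and the crux of the whole construction, is that the rotation makes $\partial_r h\neq0$ and this contributes \emph{positive} curvature at the same scale; a careful warped-product (Bochner-type) computation must show that it compensates the eccentricity deficit so that $\Ric_g\geq-(n-1)$, all while $\epsilon(r)\to0$ (needed for (2)), $\int\omega\,\epsilon\,d\log(1/r)=\infty$ (needed for branching), and $\omega(r)$ stays sub-polynomial (needed for (1)). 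Threading these competing rate constraints through the curvature computation is the principal difficulty; the geodesic-angle analysis is the second substantial step.
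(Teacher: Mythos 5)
There is a genuine gap, and it sits exactly at the heart of the construction: your branching mechanism is incompatible with your own requirement that the tangent cones be $\RR^n$. In a generalized cone $dr^2+r^2h(r)$ every minimizing geodesic emanating from the tip is necessarily radial (any competitor curve has length at least the difference of the radial coordinates), so the comparison angle $\angle\gamma_1(t)\,y\,\gamma_2(t)$ between two such geodesics with fixed angular coordinates $y,z\in\Sn^{n-1}$ is determined, up to $o(1)$, by the link distance $d_{h(t)}(y,z)$ at scale $t$. It is not an accumulated integral of increments; it is pinned at each scale by the link metric at that scale. If, as you require for conclusion (2), the eccentricity satisfies $\epsilon(r)\to 0$ so that $h(r)$ converges to the round metric as a tensor, then $d_{h(t)}(y,z)\to d_{g_{\mathrm{round}}}(y,z)$ and the comparison angle \emph{converges}. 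No amount of total rotation helps: a rotation applied to an $\epsilon$-round metric moves the distance between two fixed points by at most $O(\epsilon)$, so the condition $\int\omega(t)\,\epsilon(t)\,d\log(1/t)=\infty$ buys you nothing. With angles converging, the whole route to conclusion (3) collapses. Your Ricci worry is also real but your proposed cure is not the right one: the $\partial_r h$ terms in the warped-product Ricci enter as error terms (controlled in the paper by a slow reparametrization $f(r)$ with $rf'(r)\to 0$), not as a positive contribution that offsets an eccentricity deficit.

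The paper's resolution of both problems simultaneously is the idea your proposal is missing: take the link metrics to be $g_s=\varphi_s^*g$ where $\varphi_s$ are \emph{volume-form-preserving diffeomorphisms of the round sphere}, not eccentric perturbations. Each $g_s$ is then \emph{isometric} to the round $\Sn^{n-1}$, so $\Ric[g_s]=(n-2)g_s$ holds exactly (no deficit to compensate, and Lemma \ref{l:ExamTech} applies) and every blow-up at the tip is the round cone $\RR^n$; yet the tensors $g_s$ do not converge, and $d_{g_s}(y,z)=d_g(\varphi_s(y),\varphi_s(z))$ can be engineered (Proposition \ref{p:Sn_diffeos}) to come within $\epsilon$ of every value in $[0,\pi]$ as $s\to\infty$. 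That is what makes the comparison angles oscillate while the tangent cone stays Euclidean, and it is why the intrinsic roundness of the link and the scrambling of its identification with $\Sn^{n-1}$ must be decoupled — something a shrinking eccentricity cannot achieve. The final step of your argument, deducing (3) from angle oscillation via the $C^{0,\beta}$ rigidity of comparison angles, does match the paper's Lemma \ref{l:angle_estimate} and is sound; it is the example itself that needs to be rebuilt.
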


To explore some further properties of the example constructed in Theorem \ref{t:nodiffeomorphism}, let us recall the notion of an angle between minimizing geodesics and discuss some basic properties.  If $X$ is a length space and $x,y,z\in X$, then recall the angle $\angle xyz$ is defined by
\begin{align}
 \cos\angle xyz\equiv\frac{d(x,y)^2+d(y,z)^2-d(x,z)^2}{2d(x,y)d(y,z)}\, .
\end{align}
On the other hand, if $\gamma_1$, $\gamma_2$ are unit speed minimizing geodesics beginning at $x$, then we can attempt to define the angle between the geodesics at $x$ by
\begin{align}
 \angle\dot\gamma_1\dot\gamma_2\equiv \lim_{t\to 0}\angle\gamma_1(t)x\gamma_2(t)\, ,
\end{align}
if the limit exists.  It certainly doesn't need to be the case that the angle between two geodesics is well defined.  It is known that if $X$ is an Alexandrov space, e.g. is a limit of manifolds with a lower sectional curvature bound, then the angle is always well defined; see, for instance \cite{BGP}.  As a consequence of this in an Alexandrov space geodesics do not branch; see page 384 of \cite{GvPe} and \cite{BGP}.
A geodesic is said to be branching if there exists another geodesic that coincide with $\gamma$ on a open
subset, but  that at some point the two curves depart (branch) from each other.  Precisely, there does not
exists a common extension of the two geodesics.  Obviously, for smooth or even $C^{1,\beta}$ manifolds branching cannot occur as geodesics are entirely determined
by their initial conditions (initial velocity).  However, for general limits of manifolds with lower Ricci curvature bounds it is unknown whether or not
geodesics can branch in the interior.

Since tangent cones of even noncollapsing manifolds with lower Ricci curvature bounds are in general nonunique one cannot in general define angles.  However, it has been a question as to whether the angles are well defined at regular points.  More restrictively, we could even ask if angles are well defined if the limit is a uniform Reifenberg space.  The following answers this in the negative and show, in particular, that there are noncollapsed limit spaces where geodesics branch, or equivalently are tangent, at their initial point.

\begin{theorem}\label{t:nonunique_angle}
 The example $Y$ from Theorem \ref{t:nodiffeomorphism} has the following property.  Given any unit speed minimizing geodesics $\gamma_1,\gamma_2$ beginning at $p$ and given any $0\leq\theta\leq\pi$, there exists a sequence $t_a\to 0$ such that the angles $\angle \gamma_1(t_a)x\gamma_2(t_a)$ satisfy
\begin{align}
 \lim_{t_a\to 0}\angle \gamma_1(t_a)x\gamma_2(t_a) =\theta\, .
\end{align}
\end{theorem}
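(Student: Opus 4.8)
The plan is to reduce the entire statement to the oscillation of a single scalar quantity, and then to exploit continuity. Write $\gamma_1,\gamma_2$ for the two given unit speed minimizing geodesics beginning at $p$. Since the restriction of a minimizing geodesic to an initial subinterval is again minimizing, for all small $t$ we have $d(p,\gamma_1(t))=d(p,\gamma_2(t))=t$. Substituting this into the definition of the comparison angle with vertex $p$ gives
\begin{align*}
\cos\angle\gamma_1(t)p\gamma_2(t)=\frac{t^2+t^2-d(\gamma_1(t),\gamma_2(t))^2}{2t^2}=1-\tfrac12\,\lambda(t)^2,\qquad \lambda(t):=\frac{d(\gamma_1(t),\gamma_2(t))}{t}.
\end{align*}
By the triangle inequality $\lambda(t)\in[0,2]$, and $\lambda\mapsto\arccos\!\big(1-\tfrac12\lambda^2\big)$ is a continuous strictly increasing bijection from $[0,2]$ onto $[0,\pi]$. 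Hence the theorem is equivalent to the assertion that, as $t\to0$, every value in $[0,2]$ occurs as a subsequential limit of $\lambda(t)$.

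The second step is an elementary continuity argument. The map $t\mapsto d(\gamma_1(t),\gamma_2(t))$ is continuous and $t>0$, so $\lambda$ is continuous on $(0,r)$. The set of subsequential limits of a continuous real function as $t\to0^+$ is exactly the closed interval $[\liminf_{t\to0}\lambda(t),\,\limsup_{t\to0}\lambda(t)]$: it lies in this interval by definition, and if $a<c<b$ with $a,b$ attained as limits, then interleaving two sequences realizing $a$ and $b$ and applying the intermediate value theorem produces $t_a\to0$ with $\lambda(t_a)=c$. Therefore it suffices to prove the two extreme identities
\begin{align*}
\liminf_{t\to0}\lambda(t)=0\qquad\text{and}\qquad\limsup_{t\to0}\lambda(t)=2.
\end{align*}
Granting these, for $\theta\in(0,\pi)$ one takes $c=\sqrt{2(1-\cos\theta)}\in(0,2)$ and the times $t_a$ with $\lambda(t_a)=c$ to obtain $\angle\gamma_1(t_a)p\gamma_2(t_a)=\theta$, while the endpoints $\theta=0,\pi$ come directly from the liminf and limsup sequences.

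Everything now rests on the explicit geometry of the example $Y$ of Theorem \ref{t:nodiffeomorphism}, and this is the step I expect to be the real obstacle. The idea is to read off $\lambda(t)$ from a blow up at $p$. Because $Y$ is uniformly Reifenberg, for every small $t$ the rescaled ball $(Y,t^{-1}d_Y,p)$ is $\epsilon$-close to $(B_1(0^n),0^n)\subseteq\RR^n$, so the triple $(p,\gamma_1(t),\gamma_2(t))$ corresponds under the rescaling to a triple in $\RR^n$ with its two endpoints at distance $\approx 1$ from the origin; since comparison angles in $\RR^n$ are genuine Euclidean angles, $\angle\gamma_1(t)p\gamma_2(t)$ is well approximated by the Euclidean angle between the two blown up directions and $\lambda(t)$ by the corresponding chord. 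Thus $\liminf\lambda=0$ (resp. $\limsup\lambda=2$) amounts to showing that along a sequence of scales $t_a\to0$ the two geodesics point in nearly the same (resp. nearly opposite) Euclidean directions in the blow up. This is precisely the branching built into $Y$: the construction twists the spherical cross sections by rotations inserted at a geometric sequence of scales, whose cumulative rotation diverges as $r\to0$ and is arranged to be equidistributed, so that the relative blown up direction of \emph{any} fixed pair $\gamma_1,\gamma_2$ sweeps through the aligned and the antipodal configurations infinitely often. Establishing this equidistribution of the relative angle for an arbitrary pair of geodesics, rather than for a single favorably chosen pair, is the main difficulty, and is exactly where the detailed structure of the family $M^n_\alpha$ and of its limit (infinite total twist, absence of a $C^{1,\beta}$ structure) must be used. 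I would carry it out by describing the minimizing geodesics from $p$ in the twisted polar coordinates of the construction, verifying that their angular coordinates wind with a divergent relative phase, and then invoking the Reifenberg comparison above to convert this winding into the oscillation of $\lambda(t)$ between $0$ and $2$.
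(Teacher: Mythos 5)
Your opening reduction is correct and is a genuinely different (and cleaner) framing than the paper's: since $d(p,\gamma_i(t))=t$ for a unit speed minimizing geodesic from $p$, the comparison angle is a continuous, strictly increasing function of $\lambda(t)=d(\gamma_1(t),\gamma_2(t))/t$, and your intermediate value argument correctly shows that the set of subsequential limits of a continuous function on $(0,r)$ as $t\to 0^+$ is the whole interval $[\liminf\lambda,\limsup\lambda]$. So you have validly reduced the theorem to the two extreme statements $\liminf_{t\to 0}\lambda(t)=0$ and $\limsup_{t\to 0}\lambda(t)=2$. The paper does not make this reduction; it treats every $\theta\in[0,\pi]$ on the same footing.

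However, there is a genuine gap exactly where you flag it: the two extreme statements are never proved, only announced as a plan. That step is not a routine verification; it is the entire content of the theorem, and it is the only place the specific example enters. In the paper it is handled as follows: by Lemma~\ref{l:ExamTech} the metric on $Y\setminus\{p\}$ is $dr^2+r^2h^2(r)g(f(r))$ on $(0,\infty)\times\Sn^{n-1}$, every unit speed minimizing geodesic from $p$ is a radial line $t\mapsto(t,y)$ with $y\in\Sn^{n-1}$ fixed, and condition (3) of Proposition~\ref{p:Sn_diffeos} supplies, for the particular pair of directions $y,z$ and the given target $\theta$, parameters $s_i$ (hence radii $r_i\to 0$) with $|d_{g_{s_i}}(y,z)-\theta|\leq i^{-1}$; since at scale $r_i$ the space is close to the metric cone over $(\Sn^{n-1},g_{s_i})$, the comparison angle at $p$ converges to $\theta$. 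Two points in your sketch would need correction if you carried it out: the mechanism is not a single cumulative rotation whose total twist diverges and equidistributes, but a countable family of compactly supported, volume-form-preserving flows indexed by pairs of centers of $2^{-N}$-nets, each dragging one prescribed point along a great circle; and the claim must hold for an \emph{arbitrary} pair of directions, which is precisely what the net construction in Proposition~\ref{p:Sn_diffeos} is engineered to deliver. Note also that your reduction only requires the cases $\theta=0$ and $\theta=\pi$ of that proposition, but constructing an example realizing those two extremes for every pair of points is no easier than realizing all intermediate values, so the simplification, while elegant, does not shrink the real work.
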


The above is telling us that not only are angles not well defined in $Y$, but they are ill-defined for every pair of geodesics.  In fact, we can find sequences so that the angle between any two geodesics converge to whatever we wish, including $0$.  The above has the consequence that even in the especially nice scenario of a uniform Reifenberg space we cannot expect geodesics to be well defined by their initial conditions.

\section{Proof of Theorem 1.1}\label{s:Proof_BilipEmbed}

In this section we prove Theorem \ref{t:bilipschitzembedding}.  For the rest of this section we fix $\epsilon,r>0$.  For notational simplicity we will actually try and prove that the set $\cR_{\epsilon,100r}(Y)$ is bi-Lipschitz embeddable.  A first goal will be to construct a bi-Lipschitz embedding $\cR_{\epsilon,100r}(Y)\to L^2(Y)$ of the $(\epsilon,100r)$-Reifenberg points into the Hilbert space of $L^2$ functions on $Y$.  Let us begin by defining the embedding of interest.

\begin{definition}
 For $y\in Y$ let us associate the function $\rho_y\in L^2(Y)$ by
\begin{align}
\rho_y(z)\equiv \left\{ \begin{array}{rl}
  d_Y(x,y) &\mbox{ if } d_Y(x,y)\leq 10r \notag\\
  0 &\mbox{ if } d_Y(x,y)>10r
       \end{array} \right. ,\,
\end{align}
\end{definition}

The mapping $\rho:Y\to L^2(Y)$ is clearly a continuous map.  Seeing that $\rho$ is a bi-Lipschitz embedding roughly comes down to showing that there are a lot of points for which a reverse triangle inequality holds.  The following simple lemma will be a handy tool in that direction.  Its role will be to state that whenever we have points  where such a reverse triangle inequality holds, then we can find many more points where the same reverse triangle inequality will continue to hold.

\begin{lemma}\label{l:reverse_triangle}
 Let $x,y,z\in Y$ be such that the following hold:
\begin{enumerate}
 \item $|d(x,z)-d(z,y)|\geq\epsilon d(x,y)$.
 \item $d(x,z)\geq d(z,y)$.
 \item There exists a unit speed minimizing geodesic $\gamma:[0,t]\rightarrow Y$ such that $\gamma(0)=x$ and $\gamma(d(x,z))=z$.
\end{enumerate}
Then for every $s\in [d(x,z),t]$ we have that $|d(x,\gamma(s))-d(\gamma(s),y)|\geq\epsilon d(x,y)$.
\end{lemma}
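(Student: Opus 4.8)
The plan is to exploit the minimizing property of $\gamma$ so as to reduce the whole statement to a single application of the triangle inequality. I would begin by writing $a \equiv d(x,z)$, so that by hypothesis (3) we have $\gamma(a) = z$ with $\gamma$ unit speed and minimizing on $[0,t]$. The first observation to record is that, since every subsegment of a minimizing geodesic is itself minimizing, for all $s \in [a,t]$ we have the two exact distance identities $d(x,\gamma(s)) = s$ (from minimality of $\gamma|_{[0,s]}$) and $d(z,\gamma(s)) = s - a$ (from minimality of $\gamma|_{[a,s]}$).

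With these identities in hand, the second step is to bound $d(\gamma(s),y)$ from above by routing the triangle inequality through the point $z$:
\[
d(\gamma(s),y) \leq d(\gamma(s),z) + d(z,y) = (s-a) + d(z,y).
\]
Subtracting this estimate from $d(x,\gamma(s)) = s$ then gives
\[
d(x,\gamma(s)) - d(\gamma(s),y) \geq s - (s-a) - d(z,y) = a - d(z,y) = d(x,z) - d(z,y).
\]

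The final step is simply to feed in hypotheses (1) and (2). Because $d(x,z) \geq d(z,y)$, hypothesis (1) says precisely that $d(x,z) - d(z,y) \geq \epsilon\, d(x,y)$, so the chain above yields $d(x,\gamma(s)) - d(\gamma(s),y) \geq \epsilon\, d(x,y) \geq 0$. In particular the left-hand side is nonnegative, hence it coincides with $|d(x,\gamma(s)) - d(\gamma(s),y)|$, which is exactly the desired conclusion. I do not expect any real obstacle in this argument; the only point meriting explicit mention is the identity $d(z,\gamma(s)) = s - a$, which rests entirely on the fact that the restriction of a minimizing geodesic to a subinterval remains minimizing, and the rest is bookkeeping with the triangle inequality.
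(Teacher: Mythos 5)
Your proof is correct and follows essentially the same route as the paper: the paper's two displayed relations, $d(x,\gamma(s))=d(x,z)+d(z,\gamma(s))$ (from minimality of $\gamma$) and $d(y,\gamma(s))\leq d(y,z)+d(z,\gamma(s))$ (triangle inequality through $z$), are exactly your identities $d(x,\gamma(s))=s$, $d(z,\gamma(s))=s-a$, and your upper bound on $d(\gamma(s),y)$. You have simply written out the subtraction and the sign check that the paper summarizes as ``combining these yields the lemma.''
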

\begin{proof}
 A triangle inequality gives us
\begin{align}
 d(x,\gamma(s))=d(x,z)+d(z,\gamma(s))\, ,\\
 d(y,\gamma(s))\leq d(y,z)+d(z,\gamma(s))\, .
\end{align}
Combining these yield the lemma.
\end{proof}

Let us now see that the mapping $\rho:Y\to L^2(Y)$ is bi-Lipschitz on $\cR_{\epsilon,100r}(Y)$.  Further, we can control the bi-Lipschitz constant in terms of only the dimension $n$ and $r>0$.

\begin{lemma}
For $\epsilon\leq \bar\epsilon(n)$ and $r\leq \bar r(n)$ we have that $\rho:\cR_{\epsilon,100r}(Y)\to L^2(Y)$ is a bi-Lipschitz map.  In fact, there exists $C(n,r)>1$ such that
\begin{align}
C^{-1}\min\{d(x,y),1\}\leq ||\rho_x-\rho_y||_{L^2(Y)}\leq C d(x,y)\, .
\end{align}
\end{lemma}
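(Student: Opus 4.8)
The upper (Lipschitz) bound is the easy half. Since $t\mapsto\min\{t,10r\}$ is $1$-Lipschitz, the reverse triangle inequality $|d(z,x)-d(z,y)|\le d(x,y)$ gives the pointwise estimate $|\rho_x(z)-\rho_y(z)|\le d(x,y)$ for every $z$, and $\rho_x-\rho_y$ is supported on $B_{10r}(x)\cup B_{10r}(y)$. Because the $M_\alpha$ satisfy (\ref{con:lower_Ricci}), Bishop--Gromov passes to the limit and yields $\nu(B_{10r}(x))\le C(n)r^n$, where $\nu$ is the renormalized limit measure. Hence $||\rho_x-\rho_y||_{L^2(Y)}^2\le d(x,y)^2\,\nu(B_{10r}(x)\cup B_{10r}(y))\le C(n,r)\,d(x,y)^2$, which is the right-hand inequality. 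The substance is the lower bound, and here the strategy is the one advertised before Lemma \ref{l:reverse_triangle}: exhibit a set $G$ of definite measure on which the reverse triangle inequality holds with a definite gap. Write $\delta=d(x,y)$. If $\delta>20r$ the supports are disjoint, so $||\rho_x-\rho_y||^2_{L^2}=||\rho_x||^2+||\rho_y||^2\ge c(n)\,r^{n+2}$, using that the almost Euclidean volume of the $(\epsilon,100r)$-Reifenberg point $x$ forces $\nu(B_{10r}(x))\ge c(n)r^n$; this gives $||\rho_x-\rho_y||_{L^2}\ge C^{-1}=C^{-1}\min\{\delta,1\}$. After shrinking $\bar r(n)$ it thus suffices to treat $\delta\le 4r$ (the compact range $4r<\delta\le 20r$ is handled by the same volume estimate on $B_r(x)\setminus B_{10r}(y)$, where $|\rho_x-\rho_y|\gtrsim r$).

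Now assume $\delta\le 4r$, and first produce \emph{seeds} at the small scale $s_0=2\delta$. Since $x\in\cR_{\epsilon,100r}(Y)$, the ball $B_{9r}(x)$ is $9r\epsilon$-Gromov--Hausdorff close to $B_{9r}(0^n)\subseteq\RR^n$, and under this identification $y$ corresponds to a point $\bar y$ at distance $\delta$ from the origin. In $\RR^n$ the function $|\bar z|-|\bar z-\bar y|$ nearly attains its maximum $\delta$ on a solid-angle cone of points $\bar z$ with $|\bar z|\in[\tfrac32\delta,2\delta]$ opening in the $\bar y$ direction; on a definite such cone one has $|\bar z|-|\bar z-\bar y|\ge\tfrac12\delta$, and this cone has measure $\ge c(n)\delta^n$. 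The crucial feature is the \emph{scale}: the Gromov--Hausdorff error at scale $s_0=2\delta$ is only $\sim\epsilon\delta$, small compared with the detected gap $\tfrac12\delta$. Hence for $\epsilon\le\epsilon(n)$ there is a corresponding family of seeds $z$ in $Y$, filling a definite solid angle at scale $\sim\delta$ about $x$, with $d(x,z)-d(z,y)\ge\tfrac14\delta$ and $d(x,z)\ge d(z,y)$.

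This is exactly where Lemma \ref{l:reverse_triangle} is indispensable: one cannot simply repeat the Euclidean comparison at the large scale $9r$, since there the Gromov--Hausdorff error $\sim\epsilon r$ would swamp the signal $\sim\delta$. Instead, for each $w\in\partial B_{9r}(x)$ whose minimizing geodesic $\gamma_w$ from $x$ passes through the seed cone at scale $s_0$, condition (3) of the lemma is automatic and Lemma \ref{l:reverse_triangle} propagates the inequality \emph{algebraically}, with no further loss, giving $d(x,\gamma_w(s))-d(\gamma_w(s),y)\ge d(x,z)-d(z,y)\ge\tfrac14\delta$ for all $s\in[s_0,9r]$. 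Setting $G\equiv\bigcup_w\gamma_w([s_0,9r])$, and noting that $s\le 9r$ together with $\delta\le 4r$ forces both $d(x,\cdot)\le 10r$ and $d(y,\cdot)\le 10r$ on $G$, we conclude that $|\rho_x-\rho_y|\ge\tfrac14\delta$ throughout $G$.

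It remains to show $\nu(G)\ge c(n)r^n$, and this is the main technical obstacle. Two ingredients enter. First, by the almost rigidity theorem of Cheeger--Colding, in the almost Euclidean ball $B_{9r}(x)$ minimizing geodesics from $x$ stay uniformly close to Euclidean rays; consequently the far endpoints $w$ whose geodesics meet the seed cone themselves fill a definite solid angle, so $G$ corresponds, under the Gromov--Hausdorff approximation, to a Euclidean cone of definite opening between radii $s_0$ and $9r$. Second, that Euclidean cone has volume $\ge c(n)r^n$, and the volume convergence theorem, together with the near-Euclidean volume of the Reifenberg point $x$, forces $\nu(G)\ge c(n)r^n$. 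Combining, $||\rho_x-\rho_y||_{L^2}^2\ge(\tfrac14\delta)^2\,\nu(G)\ge c(n)\,r^n\,\delta^2$, giving the left-hand inequality with $C=C(n,r)$. The delicate point is precisely this measure estimate---converting a definite solid angle of geodesic rays into a definite $n$-dimensional measure---which genuinely uses the noncollapsing hypothesis (\ref{con:noncollapsed}), relative volume comparison, and the almost rigidity controlling how geodesics spread; the purely algebraic propagation of Lemma \ref{l:reverse_triangle} is what makes the scheme work at all, by preserving the small gap $\sim\delta$ across the large scale ratio $r/\delta$.
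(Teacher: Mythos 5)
Your overall architecture matches the paper's: the Lipschitz upper bound via the pointwise reverse triangle inequality plus a volume bound, an easy lower bound when $d(x,y)$ is comparable to $r$, and, for nearby points, a set of ``seeds'' near $y$ on which $|d(x,\cdot)-d(\cdot,y)|\gtrsim d(x,y)$, propagated outward to scale $r$ by Lemma \ref{l:reverse_triangle} and then integrated over a set of measure $\gtrsim r^n$. However, there is a genuine gap precisely at the step you yourself flag as the main obstacle: the lower bound $\nu(G)\geq c(n)r^n$. Your mechanism --- almost rigidity forces minimizing geodesics from $x$ to stay close to Euclidean rays, hence the endpoints $w$ whose geodesics meet the seed cone fill a definite solid angle --- does not work at the relevant scales. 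The closeness of a geodesic in $B_{9r}(x)$ to a Euclidean ray is only at the Gromov--Hausdorff error scale $\sim\epsilon r$, which dwarfs the diameter $\sim\delta$ of the seed region when $\delta\ll r$; so knowing a geodesic is close to a ray tells you nothing about whether it threads a target of size $\delta$, and conversely gives no lower bound on how many do. (Indeed, Theorem \ref{t:nonunique_angle} of this very paper shows that small-scale directions of geodesics are wildly unstable relative to their large-scale endpoints, so no ``solid angle of far endpoints'' can be read off from the seed cone.) Moreover, you never establish that \emph{any} of your seeds actually lie in the interior of a minimizing geodesic from $x$ that extends to radius $9r$, which is needed even to make $G$ nonempty; hypothesis (3) of Lemma \ref{l:reverse_triangle} is not ``automatic.''

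The paper closes exactly this gap with volume monotonicity rather than geodesic rigidity. It introduces the contraction set $\cC^{s}_{10r}(x)$ (points $\gamma_{x,z}(s\,d_{x,z})$ for $z\in B_{10r}(x)$) and the expansion set $\cE^{s}_{10r}(x,U)$, and uses the two-sided volume pinching (\ref{e:vol_pinch}) at $(\epsilon,100r)$-Reifenberg points to make Bishop--Gromov an almost-equality along geodesics from $x$. This yields two measure-theoretic facts that replace your solid-angle claim: (a) the contraction of $B_{10r}(x)$ by the factor $s=d_{x,y}/r$ covers all but an $\eta$-fraction of the measure at scale $10\,d_{x,y}$, so at least half the measure of $B_{d_{x,y}/4}(y)$ consists of points lying on minimizing geodesics from $x$ that extend out to $B_{10r}(x)$ (this is what legitimizes applying Lemma \ref{l:reverse_triangle}); and (b) the corresponding expansion $\cE^{d_{x,y}/r}_{10r}(x,B_{d_{x,y}/4}(y))$ has measure at least $\delta(n)\Vol(B_{10r}(x))$, because the radial expansion is almost measure-preserving after the Euclidean rescaling. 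Integrating $|d(x,\cdot)-d(\cdot,y)|^2\geq \tfrac14 d(x,y)^2$ over that expansion set finishes the proof. If you replace your rigidity argument for $\nu(G)$ with this measure-contraction argument (and take the seed set to be a definite-measure subset of $B_{d_{x,y}/4}(y)$ guaranteed by (a) to consist of extendable geodesic points), your proof becomes essentially the paper's.
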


\begin{proof}
Let us recall that by the volume convergence, \cite{C}, \cite{ChC2}, if $s<100r$, then there exists $\eta(\epsilon,r:n)>0$, which tends to zero as $\epsilon$ and $r$ do, such that if $x\in\cR_{\epsilon,100r}(Y)$ then
\begin{align}\label{e:vol_pinch}
1-\eta\leq \frac{\Vol(B_s(x))}{\Vol_{-1}((B_s(0^n))}\leq 1\, ,
\end{align}
where $\Vol_{-1}$ is the volume of a ball in hyperbolic space.  Now to begin with we see by the triangle inequality that
\begin{align}
||\rho_x-&\rho_y||^2_{L^2(Y)}=\int_Y |\rho_x(z)-\rho_y(z)|^2\leq \int_{B_{20r}(x)\cup B_{20r}(y)}|d(x,z)-d(z,y)|^2\notag\\
&\leq\left(\Vol(B_{20r}(x))+\Vol(B_{20r}(x))\right)d(x,y)^2\leq C(n,r)d(x,y)^2\, ,
\end{align}
which gives us the upper bound immediately.  Let us also observe that if $r<d(x,y)$ that the lower bound is also immediate, namely:
\begin{align}
\int_Y |\rho_x(z)-&\rho_y(z)|^2\geq \int_{B_{r/4}(y)}|d(x,z)-d(z,y)|^2\notag\\
&\geq \Vol(B_{r/4}(y))\left(\frac{1}{2}r\right)^2\geq C^{-1}(n,r)\, .
\end{align}
What is left is to deal with the case when two points $x,y\in Y$ are close and satisfy $d(x,y)\leq r$.  Let us begin with some terminology.  Given $x,y\in Y$ we let $\gamma_{x,y}:[0,d(x,y)]\rightarrow Y$ denote a unit speed minimizing geodesic between $x$ and $y$.  We may use $d_{x,y}$ as shorthand to denote the distance between $x$ and $y$.  We have two sets of particular interest we wish to define.
\begin{align}
&\cC^{s}_{t}(x)\equiv\{\gamma_{x,z}(s d_{x,z}):z\in B_t(x)\}\, ,\\
&\cE^{s}_{t}(x,U)\equiv\{z\in B_{t}(x):\gamma_{x,z}(s d_{x,z})\in U\text{ for some }\gamma_{x,z}\}\, ,
\end{align}

Note that $\cC^{s}_{t}(x)\subseteq B_s(x)$ and represents the contraction of the ball $B_t(x)$ under the gradient flow by the distance function at $x$.  The set $\cE^s_t(x,U)$ represents the opposite, the expansion the set $U$ by flowing backwards under the gradient flow, at least for those points of $U$ for which this flow exists.  By volume monotonicity and (\ref{e:vol_pinch}) we know that
\begin{align}\label{e:C_volume}
1-\eta(\epsilon,r:n)\leq \frac{\Vol(\cC^s_{10r}(x))}{\Vol(B_{10r}(x))} \leq 1\, .
\end{align}

We wish to exploit this as follows.  Let $x,y\in Y$ with $d(x,y)\leq r$.  Consider the ball $B_{\frac{d_{x,y}}{4}}(y)$, and let us note by (\ref{e:C_volume}) that for $\eta$ sufficiently small, depending only on dimension, that
\begin{align}
\frac{\Vol(\cC^{d_{x,y}/r}_{10r}(x)\cap B_{d_{x,y}/4}(y))}{\Vol(B_{d_{x,y}/4}(y))}>\frac{1}{2}\, .
\end{align}
This is roughly the statement that at least half the points of $B_{d_{x,y}/4}(y)$ are on minimizing geodesics from $x$ which extend out into the ball $B_{10r}(x)$.  The same volume monotonicity and (\ref{e:C_volume}) tells us that
\begin{align}
\frac{\Vol(\cE^{d_{x,y}/r}_{10r}(x,B_{d_{x,y}/4}(y)))}{\Vol(B_{10r}(x))}>\delta(n)\, .
\end{align}

Now let $z\in B_{d_{x,y}/4}(y)$, then a quick computation yields that $|d(x,z)-d(z,y)|>\frac{1}{2}d(x,y)$.  On the other hand we then know by Lemma \ref{l:reverse_triangle}, that if $z\in \cE^{d_{x,y}/r}_{10r}(x,B_{d_{x,y}/4}(y))$ then it still holds that
\begin{align}
|d(x,z)-d(z,y)|>\frac{1}{2}d(x,y)\, .
\end{align}
Combining these observations yields
\begin{align}
||\rho_x-&\rho_y||^2_{L^2(Y)}=\int_Y |\rho_x(z)-\rho_y(z)|^2\geq \int_{\cE^{d_{x,y}/r}_{10r}(x,B_{d_{x,y}/4}(y))}|d(x,z)-d(z,y)|^2\notag\\
&\geq\Vol(\cE^{d_{x,y}/r}_{10r}(x,B_{d_{x,y}/4}(y)))\frac{1}{4}d(x,y)^2\geq C(n,r)d(x,y)^2\, ,
\end{align}
which finishes the proof.
\end{proof}

Now we have a mapping $\rho:Y\rightarrow L^2(Y)$ such that its restriction to $\cR_{\epsilon,100r}(Y)$ is uniformly bi-Lipschitz.  If $U\subseteq \cR_{\epsilon,100r}(Y)$ is any bounded subset of $\cR_{\epsilon,100r}(Y)$, then in particular the restriction $\rho:\bar U\rightarrow L^2(Y)$ to the closure of $U$ is uniformly bi-Lipschitz.  We now rely on the results of \cite{MoWe}, specifically remark 5.II and theorem 5.3, to conclude that there exists a finite dimensional subspace $\RR^N\approx H\subseteq L^2(Y)$ such that the composition $p_H\circ\rho:\bar U\to H$, where $p_H$ is the projection to $H$, remains bi-Lipschitz.

\section{Proof of Theorems 1.2 and 1.3}

In this section we prove Theorems \ref{t:nodiffeomorphism} and \ref{t:nonunique_angle}.  The construction of the example $Y$ is based on an effective version of a principal introduced by the authors in \cite{CN2}.  In \cite{CN2} the primary goal was to take a smooth family of compact Riemannian manifolds $(X,g_s)_{s\in(-\infty,\infty)}$ with
\begin{align}
 \Ric[X_s]\geq n-1\, ,
\end{align}
and such that the volume
\begin{align}\label{con:constant_volume}
 \Vol(X_s)\equiv V\, ,
\end{align}
is independent of $s$, and construct a limit space $Y$ satisfying (\ref{con:Y_limit}),(\ref{con:lower_Ricci}) and (\ref{con:noncollapsed}) such that at some point each of the spaces $C(X_s)$ arose as a tangent cone.  Now we rely on the same basic point, but strengthen (\ref{con:constant_volume}) to the assumption that
\begin{align}\label{con:constant_volumeform}
dv_{g_s}\equiv dv_g\, ,
\end{align}
hence the volume forms are independent of $s$.  The next lemma was proved in \cite{CN2}.

\begin{lemma}\label{l:ExamTech}
Let $X^{n-1}$ be a smooth compact manifold with $g(s)$, $s\in (-\infty,\infty)$, a family of metrics with $h_\infty<1$ such that:
\begin{enumerate}
\item $\Ric[g(s)]\geq (n-2)g(s)$.
\item $\frac{d}{ds}dv(g(s))=0$, where $dv$ is the associated volume form.
\item $|\partial_{s}g(s)|,|\partial_{s}\partial_{s}g(s)|\leq 1$ and $|\nabla\partial_{s}g(s)|\leq 1$, where the norms are taken with respect to $g(s)$.
\end{enumerate}

Then there exist functions $h:\mathds{R}^{+}\rightarrow(0,1)$ and $f:\RR^{+}\rightarrow(-\infty,\infty)$ with $lim_{r\rightarrow 0}h(r)=1$, $lim_{r\rightarrow \infty}h(r)=h_\infty$, $lim_{r\rightarrow 0}f(r)=-\infty$, $lim_{r\rightarrow \infty}f(r)=\infty$ and $lim_{r\rightarrow 0,\infty}rf'(r)=0$ such that the metric $\bar{g}=dr^{2}+r^{2}h^{2}(r)g(f(r))$ on $(0,\infty)\times M$ satisfies $\Ric[\bar{g}]\geq 0$.

Further if for some $T\in (-\infty,\infty)$ we have that $g(s)=g(T)$ for $s\leq T$ then we can pick $h$ such that for $r$ sufficiently small $h(r)\equiv 1$.
\end{lemma}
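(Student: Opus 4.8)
The plan is to prove the curvature bound by a direct computation of $\Ric[\bar g]$ for the rotationally symmetric ansatz, and then to reverse-engineer the profile functions $h$ and $f$ so that every eigenvalue is nonnegative. Write $w(r)=r\,h(r)$, so that $\bar g=dr^{2}+G(r)$ with fibre metric $G(r)=w(r)^{2}g(f(r))$ on the slices $\{r\}\times X$. The geometry of such a metric is governed by the shape operator $S=\tfrac12 G^{-1}\partial_r G$ of the slices, which here splits as
\begin{align}
S=\frac{w'}{w}\,\mathrm{Id}+\frac{1}{2}f'\,g^{-1}\partial_s g\, ,
\end{align}
the first term being the pure cone part and the second recording the variation of the family $g(s)$. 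The decisive role of hypothesis (2) appears immediately: since $\tfrac{d}{ds}dv(g)=0$ is equivalent to $g^{ij}\partial_s g_{ij}=0$, the variation term is trace free, so $\mathrm{tr}\,S=(n-1)\tfrac{w'}{w}$ is exactly the mean curvature of the cone and the cross terms in $\mathrm{tr}(S^{2})$ drop out.

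First I would compute the radial Ricci curvature via the trace of the Riccati equation $\partial_r S+S^{2}+R_{\partial_r}=0$. Using the two observations above one finds
\begin{align}
\Ric[\bar g](\partial_r,\partial_r)=-(n-1)\frac{w''}{w}-\frac{1}{4}(f')^{2}\,|\partial_s g|^{2}\, ,
\end{align}
where $|\partial_s g|^{2}=g^{ik}g^{jl}\partial_s g_{ij}\partial_s g_{kl}$. This equation already reveals the entire mechanism of the lemma: the variation of the fibre metric contributes a definite \emph{negative} amount $-\tfrac14(f')^{2}|\partial_s g|^{2}$, and the only way to compensate it is to bend the warping function, i.e. to arrange $w''<0$ so that $-(n-1)w''/w>0$. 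This forces $h$ to be strictly decreasing and explains why one cannot in general keep $h\equiv 1$; conversely, where $\partial_s g\equiv 0$ (as in the last clause, for $r$ small once $f(r)\le T$) the negative term vanishes and one may take $w=r$, $h\equiv 1$.

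Next I would record the mixed and tangential components. The Codazzi identity gives $\Ric[\bar g](\partial_r,V)=(\mathrm{div}\,S)(V)-d(\mathrm{tr}\,S)(V)$; since $\mathrm{tr}\,S$ depends only on $r$ and the cone part of $S$ is divergence free, this reduces to $\tfrac12 f'\,\mathrm{div}(g^{-1}\partial_s g)$, which by hypothesis (3) is $O(|f'|)$. For the tangential directions the Gauss equation together with the Riccati equation expresses $\Ric[\bar g](V,W)$ as $w^{-2}\Ric_{g(f)}(V,W)$ minus the cone terms $\big[\tfrac{w''}{w}+(n-2)\tfrac{(w')^{2}}{w^{2}}\big]\bar g(V,W)$, plus error terms carrying factors of $f'$, $f''$, $\partial_s^{2}g$ and $\nabla\partial_s g$. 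Here hypothesis (1) is used exactly as in the standard cone computation: $\Ric_{g(f)}\ge(n-2)g(f)$ together with $w'\le h\le 1$ (which holds since $h$ is decreasing) makes the leading tangential term nonnegative, while $-w''/w\ge 0$ only helps, and the remaining terms are all controlled by hypothesis (3) with a factor of $f'$, $(f')^{2}$ or $f''$.

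The proof then reduces to choosing $h$ and $f$ so that the full symmetric Ricci endomorphism, a radial entry together with an $(n-1)\times(n-1)$ tangential block coupled by the $O(|f'|)$ mixed terms, is positive semidefinite for all $r$. Concretely, I would first fix $f$ with $f\to\mp\infty$ as $r\to 0,\infty$ and with $rf'(r)\to 0$ at both ends, chosen to decay fast enough that $r(f')^{2}$ and $r|f''|$ are integrable near the two ends (for instance $f$ growing like $\log\log$); this makes all error terms small and integrable. I would then construct $h$ decreasing from $1$ to $h_\infty$ so that the single positive quantity $-w''/w$ dominates the finitely many negative error terms pointwise, with enough room to control the cross terms via a Cauchy--Schwarz / $2\times2$-minor estimate $\Ric(\partial_r,\partial_r)\,\Ric(V,V)\ge\Ric(\partial_r,V)^{2}$. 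The main obstacle is precisely this last balancing: the \emph{total} available bending is constrained, since $\int_0^{\infty}(-w'')\,dr=w'(0)-w'(\infty)\le 1$ is finite because $h$ only runs from $1$ to $h_\infty$, so one must guarantee that the negative contributions---of size $\sim r(f')^{2}|\partial_s g|^{2}$---have small, integrable total mass, and then solve the resulting differential inequality for $h$ while simultaneously respecting the boundary values $h(0)=1$, $h(\infty)=h_\infty$ and $\lim_{r\to 0,\infty}rf'(r)=0$. Matching the asymptotics of $h$ and $f$ with the pointwise curvature inequality is the technical heart of the argument.
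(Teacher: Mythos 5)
The first thing to say is that the paper does not actually prove this lemma: it states it and attributes the proof to \cite{CN2}, so there is no internal argument to compare against. Judged on its own terms, your reduction is set up correctly and matches the mechanism that makes the lemma work. Writing $w=rh$ and $S=\tfrac12 G^{-1}\partial_r G=\tfrac{w'}{w}\,\mathrm{Id}+\tfrac12 f'\,g^{-1}\partial_s g$, the volume-form hypothesis does exactly what you say: $\mathrm{tr}(g^{-1}\partial_s g)=0$ (and, differentiating, $\mathrm{tr}(g^{-1}\partial_s^2 g)=\mathrm{tr}((g^{-1}\partial_s g)^2)$) kills the cross terms, and the radial formula $\Ric(\partial_r,\partial_r)=-(n-1)\tfrac{w''}{w}-\tfrac14 (f')^2|\partial_s g|^2$ is correct. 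The identification of the budget constraint $\int_0^\infty(-w'')\,dr=1-h_\infty$ against the cost $\int_0^\infty r(f')^2\,dr$, and the observation that the latter is finite precisely when $f$ has finite Dirichlet energy in the variable $u=\log r$ (hence the $\log\log$-type growth), is the right accounting and explains both why $h_\infty<1$ is needed and why $h\equiv 1$ is recoverable near $0$ when $g(s)$ is eventually constant.

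The genuine gap is that the lemma is an existence statement for $(h,f)$, and your proposal stops exactly where that existence has to be established: you describe the system of pointwise inequalities (radial entry, tangential block, and the $2\times 2$ minor condition $\Ric(\partial_r,\partial_r)\Ric(V,V)\ge\Ric(\partial_r,V)^2$ against the $O(|f'|)$ off-diagonal terms) but do not exhibit functions satisfying it, nor verify that a solution can meet all five asymptotic conditions simultaneously. This is not a routine afterthought: near $r=0$ the helpful tangential term $(n-2)(1-(w')^2)/w^2$ degenerates because $w'\to1$, the available bending $-w''$ must vanish fast enough that $h(0)=1$ is attained while still dominating $r(f')^2$, and the constraint $rf'\to0$ must be checked against whatever decay of $f'$ the curvature inequality forces. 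Until explicit $h,f$ (or at least a closed differential inequality with a proof that it admits a solution with the stated boundary behavior) are written down, the lemma is not proved. There is also a small imprecision in the tangential block: as a $(0,2)$-tensor $\Ric_{G}=\Ric_{g(f)}$ with no $w^{-2}$ prefactor; the comparison you want is $\Ric_{g(f)}(V,V)\ge (n-2)g(f)(V,V)=(n-2)w^{-2}\bar g(V,V)$, and writing $w^{-2}\Ric_{g(f)}$ double-counts the scaling. None of this invalidates the strategy --- it is the standard one and the one used in \cite{CN2} --- but as written the proposal is a correct plan rather than a proof.
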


The goal of the construction is then the following.  We will construct a family of metrics $\{g_s\}$ on the $(n-1)$-sphere $\Sn^{n-1}$.  In fact, each element of $\{(\Sn^{n-1},g_s)\}$ will be isometric to the standard sphere $(\Sn^{n-1},g)$.  However, as tensors it will hold that
$$dv_{g_s}\equiv dv_g\, ,$$
while
$$g_s\neq g\, .$$
That is, each $g_s$ will differ from $g$ by a volume form preserving diffeomorphism.  These diffeomorphisms, though volume form preserving, will be sufficiently degenerate that for any fixed $x,y\in \Sn^{n-1}$ we can find metrics in $\{g_s\}$ to make the distance $d(x,y)$ between $x$ and $y$ as close to any number between $0$ and $\pi$ that we wish.  The metric space $Y$ from Theorems \ref{t:nodiffeomorphism} and \ref{t:nonunique_angle} will end up being the metric constructed from this family and Lemma \ref{l:ExamTech}.  The following proposition constructs our desired family of diffeomorphisms.

\begin{proposition}\label{p:Sn_diffeos}
 There exists a smooth family of diffeomorphisms, $\varphi_s:\Sn^n\rightarrow\Sn^n$ $s\in (-\infty,\infty)$, such that
 \begin{enumerate}
 \item $\varphi_sdv_g = dv_g$, where $dv_g$ is the standard volume form on $\Sn^n$.
 \item For every integer $m\in\ZZ$ we have that $\varphi_{[m-\frac{1}{4},m+\frac{1}{4}]}=id$, the identity map.
 \item For every $x,y\in\Sn^n$, $0\leq\theta\leq\pi$ and $\epsilon>0$ there exists $s \in (-\infty,\infty)$ such that $|\pi-d_g(\varphi_{s}(x),\varphi_{s}(y))| <\epsilon$, where $d_g$ is the standard distance on $\Sn^n$.
 \item If $g_s\equiv\varphi_s^*g$, then $|\partial_{s}g_s|,|\partial_{s}\partial_{s}g_s|\leq 1$ and $|\nabla\partial_{s}g_s|\leq 1$, where the norms are taken with respect to $g_s$.
 \end{enumerate}
\end{proposition}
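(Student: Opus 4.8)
The plan is to realize the family $\varphi_s$ as a flow of $dv_g$-divergence free vector fields, modulated in the parameter $s$ so as to switch off near every integer. Property (1) is then automatic: if $W$ is a vector field with $\operatorname{div}_{dv_g}W=0$ and $\psi_\tau$ denotes its flow, then $\frac{d}{d\tau}\psi_\tau^*dv_g=\psi_\tau^*\mathcal L_W dv_g=\psi_\tau^*\big((\operatorname{div}W)\,dv_g\big)=0$, so every time-$\tau$ map preserves $dv_g$. Hence I would only ever flow along divergence free fields, and the whole problem reduces to choosing the right fields and the right $s$-dependence.

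For property (2) I would fix a smooth cutoff $\chi:\RR\to[0,1]$ vanishing on $\bigcup_{m\in\ZZ}[m-\tfrac14,m+\tfrac14]$ and positive on each complementary interval, and generate $\varphi_s$ by the nonautonomous field $\chi(s)\,W_s$. Since this field is identically zero for $s$ near an integer, $\varphi_s$ is locally constant in $s$ there, so arranging it to sit at the prescribed map on each window is only a matter of how the flow is run on the active intervals $(m+\tfrac14,m+\tfrac34)$. Property (4) is handled in tandem: from $g_s=\varphi_s^*g$ one gets $\partial_s g_s=\chi(s)\,\varphi_s^*(\mathcal L_{W_s}g)$, so keeping $|W_s|$, $|\nabla W_s|$ and their $s$-derivatives bounded controls $\partial_s g_s$, $\partial_s^2 g_s$ and $\nabla\partial_s g_s$; a slow monotone reparametrization of $s$ then pushes all three constants below $1$ without affecting (1)--(3).

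The geometric content sits in property (3), and the natural device is a volume preserving shear. Using the rotational symmetry, write $\Sn^n$ so that a fixed Killing rotation has angular field $\partial_\omega$, and take $W=u(z)\,\partial_\omega$ where $z$ is a rotation invariant coordinate; then $\operatorname{div}_{dv_g}W=0$, while the flow fixes each level $\{z=\mathrm{const}\}$ and spins it at angular speed $u(z)$. Two points at distinct values of $z$ are thereby sheared apart in the $\omega$-direction, and choosing $u$ with $u'\ne 0$ drives their angular separation through a prescribed amount; for points placed near the equatorial level this forces the round distance $d_g(\varphi_s(x),\varphi_s(y))$ toward $\pi$. Running a countable collection of such shears, adapted to a dense set of pairs $(x,y)$ and of target values, is what I would use to obtain the conclusion of (3) for every pair simultaneously.

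The main obstacle is the tension between (3) and (4). A bound $|\partial_s g_s|\le 1$ caps the strain rate of the flow at $O(1)$, so across a single active interval of length $\tfrac12$ the map $\varphi_s$ is only a bounded bi-Lipschitz distortion of its value at the neighboring window; no one interval can shear an arbitrarily close pair out to separation near $\pi$, since that demands cumulative distortion of order $\log(\pi/d_g(x,y))$. The resolution I would pursue is to exploit the entire line $s\in(-\infty,\infty)$, distributing the shearing across the infinitely many active intervals and layering the profiles $u_m$ so that closer pairs are assigned shears supported on more of them, the $O(1)$ budget being respected on each fixed interval. Verifying that this layered construction actually makes the realized separations $d_g(\varphi_s(x),\varphi_s(y))$ fill the target range for every fixed pair, while the uniform derivative bounds of (4) and the identity-windows of (2) are preserved, is the delicate bookkeeping where I expect the real work to lie; it is this reconciliation of (3) with (4), rather than any single geometric idea, that I anticipate as the crux of the proof.
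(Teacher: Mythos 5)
Your toolkit is the right one and matches the paper's --- divergence-free (cut-off Killing) fields for (1), an $s$-cutoff for (2), a slowdown for (4) --- but the step you yourself defer as ``delicate bookkeeping'' is a genuine gap, and the specific strategy you propose for it cannot close it. Property (2) forces $\varphi_s=id$ on \emph{every} window $[m-\tfrac{1}{4},m+\tfrac{1}{4}]$, so each active interval starts from the identity and whatever shear is built up on $(m+\tfrac{1}{4},m+\tfrac{3}{4})$ is wiped out before the next interval begins: nothing can be ``layered'' or accumulated across windows. Combined with your own (correct) observation that $|\partial_s g_s|_{g_s}\le 1$ gives $g_s\le e^{|s-m|}g_m$ and hence $d_g(\varphi_s(x),\varphi_s(y))=d_{g_s}(x,y)\le e^{1/4}\,d_g(x,y)$ for $s$ within $\tfrac{1}{2}$ of the nearest integer, this shows that (2), (4) and the full strength of (3) (driving an arbitrarily close pair out to distance near $\pi$) are mutually incompatible as literally stated; your layered construction is chasing something that cannot exist. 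The actual resolution is not to distribute the shear over many windows but to relax their spacing: one only needs $\varphi=id$ near a discrete sequence $s_k\to\pm\infty$ (which is all the truncations $\varphi^\alpha$ and Lemma \ref{l:ExamTech} require), after which each pair in a countable family can be given a single active stretch as long as it needs. This is implicitly what the paper's closing instruction to ``reparametrize $\varphi_s$'' accomplishes.

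The mechanism for (3) also differs from the paper's in a way that matters. Your shear $W=u(z)\partial_\omega$ moves both points and only separates them within their latitude spheres, and it is not argued that this sweeps $d_g(\varphi_s(x),\varphi_s(y))$ through all of $[0,\pi]$ for an arbitrary pair. The paper instead takes, for a given pair $(x,y)$ and scale $\epsilon$, the great circle $\cC$ through $y$ with $d_g(x,\cC)=4\epsilon$ and flows by $\bar K=\chi(d_g(\cdot,\cC))\,K$, where $K$ is the Killing rotation of $\cC$ and the cutoff is supported in the $2\epsilon$-tube around $\cC$. This fixes $x$ outright and transports $y$ around $\cC$, so $d_g(x,\varphi_s(y))$ sweeps the whole interval $[4\epsilon,\pi-4\epsilon]$ within one flow, with no accumulation needed. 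The quantifier ``for every $x,y$'' is then handled by running countably many such flows indexed by pairs of centers of $2^{-N}$-nets, and the locality of the field (it fixes $B_\epsilon(x)$ and carries $B_\epsilon(y)$ onto $B_\epsilon(\varphi_s(y))$) is exactly what transfers the conclusion from net points to nearby arbitrary points; your appeal to ``a dense set of pairs'' needs this non-spreading property to be made to work.
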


\begin{proof}
Let's begin with the following smaller claim:  For every $x,y\in\Sn^n$ and every $\epsilon>0$ there exists a smooth family of diffeomorphisms $\phi_s$, $s\in [0,1]$, such that
 \begin{enumerate}
 \item $\phi_sdv_g = dv_g$, where $dv_g$ is the standard volume form on $\Sn^n$.
 \item $\phi|_{[0,\frac{1}{4}]} = \phi|_{[\frac{3}{4},1]}=id$ $\forall s$.
 \item $\phi_s|_{B_\epsilon(x)}=id$ and $B_\epsilon(\phi_s(y))=\phi_s(B_\epsilon(y))$.
 \item For each $0\leq\theta\leq\pi$ there exists $s$ such that $|\theta-d_g(\phi_{s}(x),\phi_{s}(y))| \leq 4\epsilon$.
 \end{enumerate}

To prove the claim we can assume $x$ is the north pole without loss.  Let $\cC$ be a great circle in $\Sn^n$ which passes through $y$ and such that $d_g(x,\cC)=4\epsilon$.  Let $K$ be a rotational killing field on $\Sn^n$ which rotates $\cC$ and who gradient vanishes on $\cC$.  If $\chi:(0,1)\to \RR$ is a cutoff function such that $\chi(t)=1$ for $t\leq \epsilon$ and $\chi(t)=0$ for $t>2\epsilon$, then we can define the bump function $b(z)\equiv \chi(d_g(z,\cC))$ on $\Sn^n$.  Notice this bump function is invariant under the rotations generated by $K$.

Now let us define the vector field $\bar K\equiv b\cdot K$.  Notice that a simple computation gives
\begin{align}
{\rm div}\bar K(z) = b(z){\rm div}K(z)+\chi'\cdot\langle\nabla d_g(z,\cC),\nabla K(z)\rangle = 0\, .
\end{align}
In particular, the $1$-parameter family of diffeomorphisms generated by $\bar K$ preserve the volume form $dv_g$.  We also observe that the third condition of the claim is automatically satisfied by any diffeomorphism generated by $\bar K$.  Additionally, by the assumption that $d_g(x,\cC)=4\epsilon$, we have for each $0\leq\theta\leq\pi$ that there exists a point $z\in\cC$ in the great circle with $|d_g(x,z)-\theta|\leq 4\epsilon$.  Hence, by generating diffeomorphisms based on multiples of $\bar K$ we can easily arrange to construct $\phi_s$ as in the claim.

Now we finish the proof of the Lemma by essentially a covering argument.  For every $N\in\NN$ let $\{B_{2^{-N}}(x^N_i)\}$ be a minimal covering of $\Sn^n$.  For each $i\neq j$ let $\phi^N_{ij}$ be the family of diffeomorphisms as in the claim with $x=x^N_i$, $y=x^N_j$ and $\epsilon=2^{-N}$.  Because these diffeomorphisms all begin and end smoothly at the identity, we may union them up smoothly and let $\varphi_s$, $s\in (-\infty,\infty)$ be such a union of these families over all $N$, $i$ and $j$.  We have that $\varphi_s$ clearly satisfies the first two conditions of the proposition, we need to check the third.  So let $x,y\in\Sn^n$ with $\epsilon>0$.  Let $2^{-N}<\frac{1}{16}\epsilon$ with $x^N_i$ and $x^N_j$ such that $x\in B_{2^{-N}}(x^N_i)$ and $y\in B_{2^{-N}}(x^N_j)$.  At some point the family $\phi^N_{ij}$ will appear in the family $\varphi$, hence by the third and fourth claims for $\phi^N_{ij}$ we have some $s$ such that $d_g(\varphi_s(x),\varphi_s(y))<\epsilon$ as claimed.  To satisfy the fourth claim of the Lemma we need only to reparametrize $\varphi_s$.
\end{proof}

Now using the above proposition, for each $\alpha\in\NN$ let $\varphi^\alpha_s$ be the family of diffeomorphisms defined by
\begin{align}
\varphi^\alpha_s\equiv \left\{ \begin{array}{rl}
  \varphi_s &\mbox{ if } s\geq -\alpha \notag\\
  id &\mbox{ if } s\leq -\alpha
       \end{array} \right. ,\,
\end{align}

For each $\alpha$ let $(\Sn^{n-1},g^\alpha_s)$ be the family of metrics on $\Sn^{n-1}$ with $g^\alpha_s\equiv(\varphi^\alpha_s)^*g$, where $g$ is the standard metric on $\Sn^{n-1}$.  Similarly we let $(\Sn^{n-1},g_s)$ be the family defined by $g_s\equiv(\varphi^\alpha_s)^*g$.

The families $g^\alpha_s$ satisfy the conditions of Lemma \ref{l:ExamTech}, hence, for each $\alpha$, we can let $M_\alpha\approx C(\Sn^{n-1})\approx \RR^n$ with $g_\alpha \equiv \bar g$ the metric from Lemma \ref{l:ExamTech}.  Notice that the sequence $(M_\alpha,g_\alpha,p_\alpha)$ are all smooth manifolds, flat near the cone point $p_\alpha$.  Further it holds that
\begin{align}
(M_\alpha,g_\alpha,p_\alpha)\stackrel{GH}{\rightarrow} (Y,d_Y,p)\, ,
\end{align}
where $Y\approx C(\Sn^{n-1})\approx \RR^n$ is the metric space gotten by applying Lemma \ref{l:ExamTech} to the family $(\Sn^{n-1},g_s)$.  It is clear that $Y$ is smooth away from the cone tip $p$, and that the tangent cones of $Y$ are all $\RR^n$.  Using these two points it is not hard to check that $Y$ is a uniform Reifenberg space.  Thus by \cite{ChC2} we have that $Y$ satisfies Theorem \ref{t:nodiffeomorphism}.1.  Although in the coordinates given by Lemma \ref{l:ExamTech} it is clear $Y$ is not smooth at $p$, it may not be immediately clear that $Y$ is not smooth at $p$ in some other coordinate system.  We will see this is not the case however.  In fact, Theorem \ref{t:nodiffeomorphism} says $Y$ cannot even be induced by a $C^{0,\beta}$ metric.

We begin by proving Theorem \ref{t:nonunique_angle}.

\begin{proof}[Proof of Theorem \ref{t:nonunique_angle}]
 Let us use Lemma \ref{l:ExamTech} to identify
$$Y\setminus\{p\}\approx (0,\infty)\times\Sn^{n-1}\, ,$$
equipped with the metric $\bar g\equiv dr^2+r^2h^2(r) g(f(r))$ in this coordinate system.  Notice that every unit speed minimizing geodesic $\gamma$ from $p$ in $Y$ is thus of the form
$$\gamma(t)\equiv (t,y)\, ,$$
where $y\in \Sn^{n-1}$ is some fixed point.  Now let $\gamma_y(t)$ and $\gamma_z(t)$ be two distinct unit speed minimizing geodesics from $p$, and fix $0\leq\theta\leq\pi$.  By Proposition \ref{p:Sn_diffeos} there exists for each $i\in\NN$ a $r_i$ such that
\begin{align}
 |d_{g_{r_i}}(y,z)-\theta|\leq i^{-1}\, ,
\end{align}
where the distance is being measured on $\Sn^{n-1}$ with respect to the metric $g_{r_i}$.  In particular we have that
\begin{align}
 \angle\gamma_y(r_i)p\gamma_z(r_i)\rightarrow \theta\, ,
\end{align}
as claimed.
\end{proof}

We will now move on to the proof of Theorem \ref{t:nodiffeomorphism}.  The proof of Theorem \ref{t:nodiffeomorphism} will itself depend on Theorem \ref{t:nonunique_angle} and the following Lemma.

\begin{lemma}\label{l:angle_estimate}
Let $g$ be a $C^{0,\beta}$ metric on $B_1(0)\subseteq \RR^n$ with $||g||_{C^{0,\beta}}\leq A$ and let $\gamma_1,\gamma_2:[0,t]\rightarrow B_{1/2}(0)$ be unit speed geodesics beginning at $0$, which are minimizing with respect to $g$.  Then there exists constants $\bar t(n,A)$ and $C(n,A)$ so that if $t\leq\bar t$ then $|\angle\gamma_1(t)0^n\gamma_2(t)-\angle\gamma_1(\frac{t}{2})0^n\gamma_2(\frac{t}{2})|\leq Ct^{\beta}$.
\end{lemma}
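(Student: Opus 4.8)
The plan is to compare everything to the frozen, constant-coefficient metric $g_0 := g(0^n)$, whose geodesics are straight segments and whose distance $|x-y|_{g_0}$ is a fixed norm, and to exploit that on $B_\tau(0)$ the tensor $g$ deviates from $g_0$ by at most $A\tau^\beta$. First I would record the pointwise bound: since $g$ is $C^{0,\beta}$ with $g(0^n)=g_0$, for every $x\in B_1(0)$ we have $|g(x)-g_0|\le A|x|^\beta$ in the operator norm relative to $g_0$. Integrating this along curves upgrades it to a distance comparison: there is $C=C(n,A)$ so that for all $x,y\in B_\tau(0)$ with $\tau\le 1/2$,
\[
\bigl|\,d_g(x,y)-|x-y|_{g_0}\,\bigr|\le C A\,\tau^\beta\,|x-y|_{g_0}\, .
\]
For the upper bound one tests the straight $g_0$-segment, and for the lower bound one uses that a $g$-minimizer has $g_0$-length at least $|x-y|_{g_0}$; here I use uniform ellipticity of $g$, which after a linear change of coordinates controlled by $A$ I encode by taking $g_0$ to be Euclidean.

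Next I would reduce the angle comparison to a statement about \emph{straightness of geodesics}. Because each $\gamma_i|_{[0,s]}$ is a minimizing subsegment, $d_g(0^n,\gamma_i(s))=s$ for all $s\le t$, and the comparison-angle formula collapses to the clean identity
\[
\cos\angle\gamma_1(s)0^n\gamma_2(s)=1-\frac{D(s)^2}{2s^2}\, ,\qquad D(s):=d_g(\gamma_1(s),\gamma_2(s))\, .
\]
Subtracting the values at $s=t$ and $s=t/2$ and factoring, using $D(s)\le 2s$, one finds $|\cos\angle\gamma_1(t)0^n\gamma_2(t)-\cos\angle\gamma_1(t/2)0^n\gamma_2(t/2)|\le \tfrac{2}{t}\,|2D(t/2)-D(t)|$, so it suffices to prove $|2D(t/2)-D(t)|\le CA\,t^{1+\beta}$, i.e. that $D$ is affine in $s$ to order $s^{1+\beta}$. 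By the distance comparison, $D(s)=|\gamma_1(s)-\gamma_2(s)|_{g_0}\bigl(1+O(At^\beta)\bigr)$, so the whole matter reduces to showing that each geodesic is straight to order $s^{1+\beta}$: writing $\gamma_i(s)=s\,v_i+E_i(s)$ with $v_i$ a fixed $g_0$-unit vector, I must establish the transverse bound $|E_i(s)|_{g_0}\le CA\,s^{1+\beta}$.

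This sharp bending estimate is the heart of the proof and the main obstacle. The naive route — feeding the exact relations $d_g(0^n,\gamma_i(s/2))+d_g(\gamma_i(s/2),\gamma_i(s))=d_g(0^n,\gamma_i(s))$ into the distance comparison — yields a Euclidean triangle defect of size only $O(A\,s^{1+\beta})$, and via the sagitta relation (defect) $\sim h^2/s$ this gives just $|E_i|=O(A^{1/2}s^{1+\beta/2})$, losing a factor $t^{\beta/2}$ and falling short of the stated exponent. The point is that the first-order-in-$(g-g_0)$ part of this defect cancels: to leading order the length distortion of a nearly straight path is $\tfrac12\int(g-g_0)(\dot\sigma,\dot\sigma)$ along the $g_0$-segment, and for three segments forming an almost-triangle whose middle vertex lies near the chord these integrals cancel, leaving a genuinely second-order defect $O(A^2 s^{1+2\beta})$ and hence $|E_i|=O(A\,s^{1+\beta})$. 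Because $g$ is merely H\"older, geodesics are not $C^2$ and there is no geodesic ODE, so I would make this precise variationally rather than by differentiating: compare the $g$-length of the arc $\gamma_i|_{[0,s]}$ with those of the straight $g_0$-chord and of a one-parameter family of competitors pivoting the midpoint toward the chord, and use minimality of $\gamma_i$ together with $|g(x)-g_0|\le A|x|^\beta$ to extract the second-order gain. Equivalently, one rescales by $g^{(s)}(x)=g(sx)$, whose $C^{0,\beta}$-seminorm is $\le A s^\beta\to 0$, and proves a quantitative straightness estimate for minimizers in a metric that is $As^\beta$-close to flat.

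Granting $|E_i(s)|_{g_0}\le CA\,s^{1+\beta}$, one gets $D(s)=s\,|v_1-v_2|_{g_0}+O(A\,s^{1+\beta})$, whence $|2D(t/2)-D(t)|\le CA\,t^{1+\beta}$ and therefore $|\cos\angle\gamma_1(t)0^n\gamma_2(t)-\cos\angle\gamma_1(t/2)0^n\gamma_2(t/2)|\le CA\,t^\beta$. The final step converts this cosine bound into the stated angle bound. Writing $\xi(s):=D(s)/s\in[0,2]$ so that the comparison angle equals $2\arcsin(\xi(s)/2)$, the map $\xi\mapsto 2\arcsin(\xi/2)$ is Lipschitz away from $\xi=2$, so the estimate $|\xi(t)-\xi(t/2)|\le CA\,t^\beta$ immediately gives the desired $C\,t^\beta$ bound whenever the angle stays bounded away from $\pi$. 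The sole remaining subtlety is the classical degeneracy of comparison angles as the geodesics become nearly opposite ($\xi\to 2$, angle $\to\pi$); this I would treat directly through the $g_0$-directions $u_i(s)=\gamma_i(s)/|\gamma_i(s)|_{g_0}$, for which the straightness estimate gives $\angle_{g_0}(u_i(t),u_i(t/2))\le CA\,t^\beta$ and the spherical triangle inequality controls the change in their mutual angle across the two scales. Finally I would choose $\bar t(n,A)$ small enough that all error terms lie in their valid range, completing the argument.
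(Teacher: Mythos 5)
Your overall architecture---freezing $g_0=g(0^n)$, the two-sided comparison $\bigl|d_g(x,y)-|x-y|_{g_0}\bigr|\le CA\,\tau^\beta|x-y|_{g_0}$, the identity $\cos\angle\gamma_1(s)0^n\gamma_2(s)=1-D(s)^2/2s^2$, and the reduction to a straightness estimate for the $\gamma_i$---is exactly the paper's. The gap is in the step you yourself call the heart of the proof: the bound $|E_i(s)|_{g_0}\le CA\,s^{1+\beta}$ is not merely unproven by your sketch, it is false for general $C^{0,\beta}$ metrics. The advertised cancellation does not occur: the first-order distortions $\tfrac12\int(g-g_0)(\dot\sigma,\dot\sigma)$ over the two legs and over the chord of your almost-degenerate triangle are integrals over curves separated by the height $h$ of that triangle, and on curves separated by $h$ the tensor $g-g_0$ may differ by $\sim A\,h^\beta$; the residual defect is therefore of size $A\,h^\beta s$, not $O(A^2s^{1+2\beta})$. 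Balancing the Euclidean excess $h^2/s$ against the metric gain $A\,h^\beta s$ gives $h\sim A^{1/(2-\beta)}s^{2/(2-\beta)}$, and this is realized: for the conformal metric $g=(1-A\chi\cdot|x_2|^\beta)\,\delta_{ij}$ on $\RR^2$ (with $\chi$ a suitable cutoff), which is $C^{0,\beta}$ with $g(0)=\delta$, the minimizer from $0$ toward $(s,0)$ bows out to height $\simeq s^{2/(2-\beta)}$, and $2/(2-\beta)<1+\beta$ exactly when $0<\beta<1$. So minimizers of a $C^{0,\beta}$ metric are only $C^{1,\beta/(2-\beta)}$-straight, and the $s^{1+\beta}$ target your proof requires is unattainable; the variational ``pivoting the midpoint'' scheme you propose cannot close this.

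The fix is to lower your sights, which is what the paper does. From the distance comparison alone, additivity $d_g(0,\gamma_i(s/2))+d_g(\gamma_i(s/2),\gamma_i(s))=s$ along a minimizer gives a Euclidean triangle defect $O(As^{1+\beta})$, hence (by the sagitta relation, i.e.\ your ``naive route'') the midpoint estimate $|\gamma_i(\tfrac t2)-\tfrac12\gamma_i(t)|\le Ct^{1+\beta/2}$, which is precisely the displayed estimate (\ref{e:midpoint}) in the paper's proof. Fed into your (correct) chain $|\cos\angle(t)-\cos\angle(t/2)|\le\tfrac2t|2D(t/2)-D(t)|$ this yields $Ct^{\beta/2}$ rather than $Ct^\beta$---and $Ct^{\beta/2}$ is in fact the exponent the paper invokes when the lemma is applied in the proof of Theorem \ref{t:nodiffeomorphism}, where any positive power of $t$ suffices because only the convergence of the dyadic sum $\sum_k 2^{-ck\beta}$ is used. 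Your closing remarks about the degeneration of $\arccos$ near $\pi$ and the need to pass to the $g_0$-directions are a legitimate point of care (one the paper glosses over), but they do not rescue the exponent. Replacing your false $s^{1+\beta}$ straightness claim by the $t^{1+\beta/2}$ midpoint bound, and correspondingly weakening the conclusion to $Ct^{\beta/2}$, turns your argument into the paper's.
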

\begin{remark}
By definition the $C^{0,\beta}$ norm of a metric tensor here as being the maximum of the $C^{0,\beta}$ norm of $g$ and $g^{-1}$ as matrices on $\RR^n$.
\end{remark}
\begin{proof}
First by the $L^{\infty}$ estimate on $g$ and $g^{-1}$ we can pick $\bar t(A)$ so that $B^g_{2\bar t}(x)\subseteq B^{\RR^n}_1(0)$ for each $x\in B_{1/2}(0)$.  Further we point out that we can assume without loss of generality that $g_{ij}(0)=\delta_{ij}$.  To ensure this we need to change by an affine map, which by the $L^\infty$ bound on $g$ will affect angles and euclidean distances by at most a $C(n,A)$ factor and so there is no harm.

Now, for each $x\in B^g_{2\bar t}(0)$ we have $(1-C(A)|x|^\beta_{\RR^n})\delta_{ij}\leq g_{ij}(x)\leq (1+C(A)|x|^\alpha_{\RR^n})\delta_{ij}$, and so by integrating along curves we get for any $x,y\in B^g_{\bar h}(0)$ that
\begin{align}
(1-C|x,y|_g^\beta)^{1/2}\leq \frac{|x,y|_{\RR^n}}{|x,y|_g} \leq (1+C|x,y|_g^\beta)^{1/2}\, .
\end{align}
From the above we immediately get the estimates
\begin{align}\label{e:midpoint}
 |\gamma_1(\frac{t}{2})-\frac{1}{2}\gamma_1(t)|<Ct^{1+\beta/2}\, ,\\
 |\gamma_2(\frac{t}{2})-\frac{1}{2}\gamma_2(t)|<Ct^{1+\beta/2}\, .
\end{align}
Combining these immediately yields
\begin{align}
&|\angle\gamma_1(t)0^n\gamma_2(t)-\angle\gamma_1(\frac{t}{2})0^n\gamma_2(\frac{t}{2})|\, \\ &\leq |\cos^{-1}\left(\frac{2t^2 -d_g(\gamma_1(t),\gamma_2(t))^2}{2t^2}\right)- \cos^{-1}\left(\frac{2(\frac{t}{2})^2 -d_g(\gamma_1(\frac{t}{2}),\gamma_2(\frac{t}{2}))^2}{2(\frac{t}{2})^2}\right)|\, ,\\
&\leq Ct^{\beta}+|\cos^{-1}\left(1-\frac{|\gamma_1(t)-\gamma_2(t)|_{\RR^n}^2}{2t^2}\right) -\cos^{-1}\left(1-\frac{|\gamma_1(\frac{t}{2})-\gamma_2(\frac{t}{2})|_{\RR^n}^2}{2(\frac{t}{2})^2}\right)|\, ,\\
&\leq Ct^{\beta}\, ,
\end{align}
where the last inequality follows from (\ref{e:midpoint}).
\end{proof}

We can now prove the main theorem:

\begin{proof}
[Proof of Theorem \ref{t:nonunique_angle}]
Assume for some $\beta>0$ that such a homeomorphism $\phi$ did exist, which without loss we can assume maps the cone point to the origin.  Let $\gamma_1$,$\gamma_2$ be aribtrary minimizing geodesics from $p$ in $Y$.  By Lemma \ref{l:angle_estimate} we have that for some $C>0$ and all $t$ sufficiently small that
\begin{align}\label{e:angle_converge}
 |\angle\gamma_1(t)0^n\gamma_2(t)-\angle\gamma_1(\frac{t}{2})0^n\gamma_2(\frac{t}{2})|\leq Ct^{\beta/2}\, .
\end{align}
Now we claim that this is a contradiction.  First let us note that if $t_i\equiv 2^{-i}$, then for all $i<j$ large we get the estimate
\begin{align}
 |\angle\gamma_1(t_i),0,\gamma_2(t_i)-\angle\gamma_1(t_j),0,\gamma_2(t_j)|\leq C\sum_{k=i}^j 2^{-\beta k}\, .
\end{align}
In particular, we see that the angles $\angle\gamma_1(t_i),0,\gamma_2(t_i)$ are converging uniformly.  However, by Theorem \ref{t:nonunique_angle} there exists $s_i\to 0$ and $r_i\to 0$ such that as we dilate $Y$ by $s^{-1}_i$ or $r_i^{-1}$ we have that $\gamma_1$,$\gamma_2$ are converging to geodesic rays out of the origin whose angles satisfy
\begin{align}
 \angle\gamma_1(s_i)0\gamma_2(s_i)\to 0\, ,\\
 \angle\gamma_1(r_i)0\gamma_2(r_i)\to \pi\, .
\end{align}
This contradicts (\ref{e:angle_converge}), which gives that the angles between the limiting rays must be the same.
\end{proof}

\end{document}